\let\OLDthebibliography\thebibliography
\renewcommand\thebibliography[1]{
  \OLDthebibliography{#1}
  \setlength{\parskip}{0pt}
  \setlength{\itemsep}{0pt plus 0.3ex}
}
\newtheorem{thm}{Theorem}[section]
\newtheorem{lemma}[thm]{Lemma}
\newtheorem{cor}[thm]{Corollary}
\theoremstyle{definition}
\newtheorem{defn}[thm]{Definition}
\theoremstyle{remark}
\numberwithin{equation}{section}
\newcommand*\wrapletters[1]{\wr@pletters#1\@nil}
\def\wr@pletters#1#2\@nil{#1\allowbreak\if&#2&\else\wr@pletters#2\@nil\fi}
\def\le{\leqslant} \def\ge{\geqslant}
\def \bA {\mathbf A}
\def \bbF {\mathbb F}
\def \bN {\mathbb N}
\def \bZ {\mathbb Z}
\def \ba {\mathbf a}
\def \bx {\mathbf x}
\def \fl {\mathfrak l}
\def \fp {\mathfrak p}
\def \fq {\mathfrak q}
\def \cL {\mathcal L}
\def \cP {\mathcal P}
\def \cQ {\mathcal Q}
\def \det {\mathrm{det}}
\def \deg {\mathrm{deg}}
\begin{document}
\title[Expansion properties of polynomials over finite fields]{Expansion properties of polynomials over finite fields}
\author[Nuno Arala \and Sam Chow]{Nuno Arala \and Sam Chow}
\address{Mathematics Institute, Zeeman Building, University of Warwick, Coventry CV4 7AL}
\email{Nuno.Arala-Santos@warwick.ac.uk}
\address{Mathematics Institute, Zeeman Building, University of Warwick, Coventry CV4 7AL}
\email{Sam.Chow@warwick.ac.uk}
\subjclass[2020]{Primary: 11T06, 11B30, 51B05}
\keywords{Polynomial expansion, finite fields, incidence geometry, spectral theory}
\thanks{}
\date{}
\begin{abstract} We establish expansion properties for suitably generic polynomials of degree $d$ in $d+1$ variables over finite fields. In particular, we show that if $P\in\bbF_q[x_1,\ldots,x_{d+1}]$ is a polynomial of degree $d$ coming from an explicit, Zariski dense set, and $X_1,\ldots,X_{d+1}\subseteq\bbF_q$ are suitably large, then $|P(X_1,\ldots,X_{d+1})|=q-O(1)$. 
Our methods rely on a higher-degree extension of a result of Vinh on point--line incidences over a finite field. 
\end{abstract}
\maketitle

\section{Introduction}

For a field $\bbF$, a polynomial $P\in\bbF[x_1,\ldots,x_k]$ and sets $A_1,\ldots,A_k\subseteq\bbF$, we define the set
\[
P(A_1,\ldots,A_k) =
\{P(a_1,\ldots,a_k):
a_i \in A_i
\text{ for }i=1,\ldots,k\}.
\]
\emph{Expansion properties} of $P$ are assertions that if $A_1,\ldots,A_k\subseteq\bbF$ are not too large, then the image $P(A_1,\ldots,A_k)$ is substantially larger than each individual $A_i$. 
The \emph{Elekes--R\'onyai} problem \cite{deZeeuw} is to show that expansion occurs unless $P$ has a very specific shape, e.g. $P$ is additively structured or multiplicatively structured.
For $\bbF=\bbF_q$ a finite field, stronger forms of expansion assert that if $A_1,\ldots,A_k$ are suitably large subsets of $\bbF_q$ then $|P(A_1,\ldots,A_k)| \approx q$.

Tao \cite{Tao}, inspired by the slightly more restrictive hierarchy put forward in \cite{HLS}, proposed the following hierarchy for expansion statements.
\begin{enumerate}[label=(\roman*)]
\item\emph{Weak asymmetric expansion}: there exist absolute constants $c, C>0$ such that $$|P(A_1,\ldots,A_k)|\geq C^{-1}
\min\{|A_1|,\ldots,|A_k|\}^{1-c}q^c$$ whenever $|A_1|,\ldots,|A_k|\geq Cq^{1-c}$.
\item\emph{Moderate asymmetric expansion}: there exist absolute constants $c, C>0$ such that $$|P(A_1,\ldots,A_k)|\geq C^{-1}q$$ whenever $|A_1|,\ldots,|A_k|\geq Cq^{1-c}$.
\item\emph{Almost strong asymmetric expansion}: there exist absolute constants $c,C>0$ such that $$|P(A_1,\ldots,A_k)|\geq q-Cq^{1-c}$$ whenever $|A_1|,\ldots,|A_k|\geq Cq^{1-c}$.
\item\emph{Strong asymmetric expansion}: there exist absolute constants $c,C>0$ such that $$|P(A_1,\ldots,A_k)|\geq q-C$$ whenever $|A_1|,\ldots,|A_k|\geq Cq^{1-c}$.
\item\emph{Very strong asymmetric expansion}: there exist absolute constants $c,C>0$ such that $$P(A_1,\ldots,A_k)=\bbF_q$$ whenever $|A_1|,\ldots,|A_k|\geq Cq^{1-c}$.
\end{enumerate}
The qualifier ``asymmetric'' in the above refers to the fact that the sets $A_1,\ldots,A_k$ are allowed to differ. It is worth noting that most of these forms of expansion are trivial for a fixed $q$ and a fixed polynomial $P$ (``fixed'' meaning that the constants are allowed to depend on them). They become non-trivial when $q$ is allowed to vary, with $P$ varying as well along suitable families of polynomials.

In order to state the most general result in this paper, it is convenient to introduce the following technical definition.

\begin{defn}
Let $P\in\bbF[x_1,\ldots,x_n]$ be a polynomial of degree $d$, where $n\geq2$. We say that $P$ is \emph{nice} if the following holds up to a permutation of the variables $x_1,\ldots,x_n$: if one writes
$$P(x_1,\ldots,x_n)=ax_n^d+\sum_{k=1}^dP_k(x_1,\ldots,x_{n-1})x_n^{d-k}\text{,}$$
where each $P_k$ is a polynomial of degree at most $k$, then the polynomials $P_1,\ldots,P_d\in\bbF[x_1,\ldots,x_n]$ are algebraically independent.
\end{defn}

If we have $n=d+1$ in the notation of the definition above --- as will typically be the case in this paper 
--- and the polynomial
$$\det\left(\frac{\partial}{\partial x_j}P_k\right)_{k,j=1,\ldots,d}$$
is not the zero polynomial, then  $P_1,\ldots,P_d$ are algebraically independent; see for example
\cite[Ch. I, (11.4)]{Lefschetz}. The converse fails in positive characteristic, but this observation gives a sufficiency criterion for niceness that a generic polynomial satisfies and that is easy to check in practice. 

\subsection*{Notation}

We use the Vinogradov notations $\ll$ and $\gg$, as well as the Bachmann--Landau notation $O(\cdot)$.
Let $f$ and $g$ be complex-valued functions. We write $f \ll g$ or $f=O(g)$ if $|f|\leq C|g|$ pointwise, for some constant $C>0$. We use a subscript within these notations to indicate possible dependence for the implied constant $C$. 

\subsection*{Statement of results}

We are now ready to state the most general result in this paper.

\begin{thm}
\label{maingen}
Let $d\in\bN$, and let $q$ be a prime power. Then there exists a constant $C>0$ such that, if $P\in\bbF_q[x_1,\ldots,x_{d+1}]$ is a nice polynomial of degree $d$, and $X_1,\ldots,X_{d+1}\subseteq\bbF_q$ satisfy $|X_k|\geq Cq^{d/(d+1)}$ for $k=1,2,\ldots,d+1$, then
$$
|P(X_1,\ldots,X_{d+1})|=q-O_d\left(\frac{q^{d+1}}{|X_1|\cdots|X_{d+1}|}\right)\text{.}
$$
\end{thm}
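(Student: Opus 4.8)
The plan is to bound the number of values $t \in \bbF_q$ omitted by $P$ on the box $X_1 \times \cdots \times X_{d+1}$ by a second-moment argument, estimating the relevant variance through a point--hyperplane incidence count of moment-curve type. Using niceness, I would write $P = a x_{d+1}^d + \sum_{k=1}^d P_k(\bx') x_{d+1}^{d-k}$ with $\bx' = (x_1,\dots,x_d)$ and $P_1,\dots,P_d$ algebraically independent, and for each $t$ set $N(t) = \#\{(\bx', s) \in (X_1\times\cdots\times X_d)\times X_{d+1} : P(\bx',s) = t\}$. Writing $M = |X_1|\cdots|X_{d+1}|$ and $M_0 = |X_1|\cdots|X_d|$, one has $\sum_t N(t) = M$, so $M/q$ is the mean of $N$, and since each omitted value contributes $(M/q)^2$,
\[
\#\{t : N(t) = 0\}\cdot (M/q)^2 \le \sum_{t\in\bbF_q}\big(N(t) - M/q\big)^2 =: V.
\]
As $|P(X_1,\dots,X_{d+1})| = q - \#\{t : N(t) = 0\}$, it suffices to prove the variance bound $V \ll_d q^{d-1} M$, which yields $\#\{t : N(t) = 0\} \le q^2 V / M^2 \ll_d q^{d+1}/M$.

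To estimate $V$ I would substitute $\bu = (P_1(\bx'),\dots,P_d(\bx')) \in \bbF_q^d$, so that the fibre $\{P(\cdot, s) = t\}$ becomes the hyperplane $\langle \bu, \bv(s)\rangle = t - a s^d$ with $\bv(s) = (s^{d-1},\dots,s,1)$ lying on the rational normal curve. Letting $f(\bu)$ count the preimages of $\bu$ in the box and $R = \sum_\bu f(\bu)^2$, the centred counting function $h(s,t) = \#\{\bx' : P(\bx',s) = t\} - M_0/q$ is the image of $f - \bar f$ under the biregular incidence operator between $\bbF_q^d$ and these hyperplanes. The governing spectral fact --- the higher-degree extension of Vinh's theorem --- is that the nontrivial singular values of this operator are $\le q^{(d-1)/2}$; this follows because two distinct points are jointly incident to at most $d-1$ of the hyperplanes, the polynomial $\langle \bu - \bu', \bv(s)\rangle$ having degree $\le d-1$ in $s$. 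Hence $\sum_{s,t} h(s,t)^2 \le q^{d-1} R$, and Cauchy--Schwarz in the $|X_{d+1}|$ active values of $s$ gives $V \le |X_{d+1}| \sum_{s,t} h(s,t)^2 \ll_d q^{d-1} R\, |X_{d+1}|$.

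Comparing with the target, everything reduces to the combinatorial-geometric bound $R \ll_d M_0$, and this is where the real work lies. Algebraic independence of $P_1,\dots,P_d$ makes $\Phi = (P_1,\dots,P_d) : \bbA^d \to \bbA^d$ dominant and generically finite of degree $O_d(1)$, so a generic fibre has $O_d(1)$ points; the locus over which fibres acquire positive dimension is a proper subvariety $W \subsetneq \bbA^d$ of degree $O_d(1)$. By the Schwartz--Zippel bound, $W$ meets the box $X_1\times\cdots\times X_d$ in $O_d(M_0/\min_k|X_k|) = o(M_0)$ points, using $\min_k |X_k| \ge C q^{d/(d+1)}$. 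Discarding these points leaves a set $Y$ with $|Y| = (1 - o(1)) M_0$ on which $\Phi$ is $O_d(1)$-to-one, whence the associated collision count is $\ll_d M_0$; re-running the argument with $Y$ in place of the full box lower-bounds $|P(X_1,\dots,X_{d+1})|$ by the contribution of the good part and closes the proof. I expect the main obstacle to be exactly this uniform fibre control: showing that the degenerate locus has bounded degree and is negligible on the box, with extra care in positive characteristic, where the Jacobian criterion for algebraic independence can fail and inseparability must be accounted for.
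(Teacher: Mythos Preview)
Your proposal is correct and runs closely parallel to the paper's argument: the two essential ingredients are the same, namely the higher-degree Vinh-type incidence/spectral bound (the paper's Theorem~\ref{higher}) and the fibre-control step using algebraic independence of $P_1,\dots,P_d$ (the paper's Lemma~\ref{bounddeg}). The difference is in packaging. The paper argues by \emph{zero incidences}: letting $W=\bbF_q\setminus P(X_1,\dots,X_{d+1})$, the point set $\psi(X_{d+1}\times W)$ meets none of the curves $\fq_{\varphi(\bx')}$, and Theorem~\ref{higher} then gives $|X_{d+1}|\,|W|\cdot|\cQ_{X_1,\dots,X_d}|\le q^{d+1}$ with $|\cQ_{X_1,\dots,X_d}|\gg_d M_0$, finishing in one stroke. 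Your second-moment route is a weighted $L^2$ version of the very same bipartite incidence structure; it is a step longer but in principle yields more (equidistribution of $N(t)$, not merely nonvanishing). Your collision bound $R\ll_d M_0$ after discarding a bounded-degree bad locus is exactly the paper's use of Lemma~\ref{bounddeg} to show $|\cQ_{X_1,\dots,X_d}|\gg_d M_0$; note that the discarded set has size $O_d(q^{d-1})$ outright (it lies in a proper subvariety of bounded degree), so Schwartz--Zippel on the box is not really needed.

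One point to tighten: the codegree bound ``two distinct points lie in at most $d-1$ common hyperplanes'' is not by itself sufficient to conclude that the nontrivial singular values are at most $q^{(d-1)/2}$. You need either a short Fourier computation (expand $g=f-\bar f$ in additive characters; only frequencies on the cone over the moment curve survive, and each such frequency arises from at most one pair $(s,\lambda)$), or the paper's device of fixing the top coefficients and reducing to Vinh's line theorem, which is how Theorem~\ref{higher} is proved. With that gap filled, your argument goes through and gives the same bound as the paper.
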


For fixed $d$, this result sits between almost strong expansion and strong expansion, in that it neither is implied by the former nor implies the latter. For example, the following corollary, establishing almost strong expansion for nice polynomials of degree $d$ in $d+1$ variables, is now immediate.

\begin{cor}
\label{modgen}
Let $d\in\bN$, and let $q$ be a prime power. Let $C,\varepsilon>0$. If $P\in\bbF_q[x_1,\ldots,x_{d+1}]$ is a nice polynomial of degree $d$, and subsets $X_1,\ldots,X_{d+1}\subseteq\bbF_q$ satisfy $|X_k|\geq Cq^{(1+\varepsilon)d/(d+1)}$ for $k=1,2,\ldots,d+1$, then
$$|P(X_1,\ldots,X_{d+1})|\geq q-O_{d,C,\varepsilon}(q^{1-\varepsilon})\text{.}$$
\end{cor}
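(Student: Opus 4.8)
The plan is to derive Corollary~\ref{modgen} directly from Theorem~\ref{maingen}, treating the latter as a black box. The essential observation is that both statements concern nice polynomials of degree $d$ in $d+1$ variables, so the hypotheses align once we check that the lower bound $|X_k| \geq Cq^{(1+\eps)d/(d+1)}$ on the corollary's sets is strong enough to invoke the theorem. First I would verify the applicability of Theorem~\ref{maingen}: since $\eps > 0$, for $q$ sufficiently large we have $q^{(1+\eps)d/(d+1)} \geq q^{d/(d+1)}$, and after possibly enlarging the constant $C$ to absorb the theorem's constant and to handle the finitely many small $q$, the condition $|X_k| \geq Cq^{d/(d+1)}$ of the theorem is satisfied. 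Thus the conclusion of the theorem applies with the same sets $X_1,\ldots,X_{d+1}$.

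The second step is to simplify the error term. Theorem~\ref{maingen} gives
\[
|P(X_1,\ldots,X_{d+1})| = q - O_d\left(\frac{q^{d+1}}{|X_1|\cdots|X_{d+1}|}\right)\text{,}
\]
so it suffices to bound the error $q^{d+1}/(|X_1|\cdots|X_{d+1}|)$ by a constant multiple of $q^{1-\eps}$. Using the hypothesis $|X_k| \geq Cq^{(1+\eps)d/(d+1)}$ for each of the $d+1$ indices, the denominator satisfies
\[
|X_1|\cdots|X_{d+1}| \geq C^{d+1} q^{(1+\eps)d}\text{,}
\]
whence the error term is at most $C^{-(d+1)} q^{d+1 - (1+\eps)d} = C^{-(d+1)} q^{1-\eps d}$. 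Since $d \geq 1$, we have $q^{1-\eps d} \leq q^{1-\eps}$, and the implied constant depends only on $d$, $C$, and $\eps$, as claimed.

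Finally I would assemble these estimates: combining the displayed identity from the theorem with the bound $q^{d+1}/(|X_1|\cdots|X_{d+1}|) = O_{d,C,\eps}(q^{1-\eps})$ yields
\[
|P(X_1,\ldots,X_{d+1})| \geq q - O_{d,C,\eps}(q^{1-\eps})\text{,}
\]
which is exactly the assertion of Corollary~\ref{modgen}. I do not anticipate a genuine obstacle here, since this is a routine deduction; the only point requiring minor care is the exponent bookkeeping, namely confirming that the case $d=1$ (where $1 - \eps d = 1 - \eps$ exactly) is the extremal one and that for $d \geq 2$ the error is in fact smaller than required, so no sharpness is lost. The real content lies entirely in Theorem~\ref{maingen}; the corollary merely repackages its error term under a mildly stronger size hypothesis on the $X_k$.
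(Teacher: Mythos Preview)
Your proposal is correct and matches the paper's approach: the paper simply declares the corollary ``immediate'' from Theorem~\ref{maingen}, and what you have written is precisely the routine verification of that immediacy, including the exponent bookkeeping and the handling of small $q$ via the implied constant.
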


Similarly, we have the following corollary, which differs from a strong expansion statement in that it requires each of $X_1,\ldots,X_{d+1}$ to contain a positive proportion of the elements of the field.

\begin{cor}
\label{notstronggen}
Let $d\in\bN$, and let $q$ be a prime power. Let $\delta>0$. If $P\in\bbF_q[x_1,\ldots,x_{d+1}]$ is a nice polynomial of degree $d$, and subsets $X_1,\ldots,X_{d+1}\subseteq\bbF_q$ satisfy $|X_k|\geq \delta q$ for $k=1,2,\ldots,d+1$, then
$$|P(X_1,\ldots,X_{d+1})|\geq q-O_{d,\delta}(1)\text{.}$$
\end{cor}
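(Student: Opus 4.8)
The plan is to derive Corollary~\ref{notstronggen} directly from Theorem~\ref{maingen} by a routine specialisation of parameters. Corollary~\ref{notstronggen} is the special case of the main theorem in which each $|X_k|$ is at least a fixed positive proportion $\delta$ of the field, so the whole task is to verify that the hypotheses of Theorem~\ref{maingen} are satisfied and then to simplify the error term it produces.

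First I would check the hypothesis $|X_k| \geq C q^{d/(d+1)}$. Since $d/(d+1) < 1$, for all sufficiently large $q$ (depending only on $d$ and $\delta$) we have $\delta q \geq C q^{d/(d+1)}$, because $\delta q / q^{d/(d+1)} = \delta q^{1/(d+1)} \to \infty$. Thus the assumption $|X_k| \geq \delta q$ forces $|X_k| \geq C q^{d/(d+1)}$ once $q$ exceeds a threshold depending only on $d$ and $\delta$; the finitely many remaining small values of $q$ can be absorbed into the implied constant, since for fixed $q$ the bound $|P(X_1,\ldots,X_{d+1})| \geq q - O_{d,\delta}(1)$ is trivially true with a large enough constant. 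Hence Theorem~\ref{maingen} applies.

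Next I would substitute into the conclusion of Theorem~\ref{maingen}. The theorem gives
$$
|P(X_1,\ldots,X_{d+1})| = q - O_d\!\left(\frac{q^{d+1}}{|X_1|\cdots|X_{d+1}|}\right)\text{.}
$$
Using $|X_k| \geq \delta q$ for each of the $d+1$ indices, the denominator satisfies $|X_1|\cdots|X_{d+1}| \geq (\delta q)^{d+1} = \delta^{d+1} q^{d+1}$, so the error term is bounded by
$$
\frac{q^{d+1}}{|X_1|\cdots|X_{d+1}|} \leq \frac{q^{d+1}}{\delta^{d+1} q^{d+1}} = \delta^{-(d+1)}\text{,}
$$
which is $O_{d,\delta}(1)$. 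This yields $|P(X_1,\ldots,X_{d+1})| \geq q - O_{d,\delta}(1)$, as required.

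Since this is a straightforward deduction from the already-established Theorem~\ref{maingen}, there is no substantive obstacle; the only point requiring a moment's care is the interplay between the two lower-bound hypotheses, namely ensuring that $\delta q$ eventually dominates $C q^{d/(d+1)}$ and dealing with the finitely many small $q$ by enlarging the implied constant. Everything else is elementary bounding of the error term.
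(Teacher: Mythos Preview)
Your proposal is correct and matches the paper's approach: the paper states this corollary as an immediate consequence of Theorem~\ref{maingen} without giving a separate proof, and your argument spells out exactly the routine deduction the paper has in mind. Your handling of small $q$ by absorbing into the implied constant and the bounding of the error term via $|X_1|\cdots|X_{d+1}| \geq (\delta q)^{d+1}$ are both fine.
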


In the special case when $P$ is a ternary quadratic polynomial --- so that $d=2$ in our notation --- the technical hypothesis that $P$ is nice can be given a more satisfactory interpretation. Indeed we will prove, by a simple computation, the following niceness criterion for ternary quadratic polynomials, which matches the main assumption in \cite{PVZ2019}.

\begin{lemma}
\label{nicequad}
Let $q$ be an odd prime power and let $Q\in\bbF_q[x,y,z]$ be a ternary quadratic polynomial. Then $Q$ is nice if and only if $Q(x,y,z)$ depends on each variable and does not equal $g(h(x)+k(y)+\ell(z))$ for some univariate polynomials $g,h,k,\ell$.
\end{lemma}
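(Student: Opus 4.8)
The plan is to reduce niceness for $Q$ to a pair of elementary algebraic-dependence questions in two variables and to settle these by hand; note that the Jacobian criterion recorded after the definition of niceness is only \emph{sufficient} in positive characteristic, so it cannot be used to detect \emph{failure} of independence. The workhorse will be the following characteristic-free observation: if $f\in\bbF_q[x,y]$ is affine-linear and non-constant and $g\in\bbF_q[x,y]$ is arbitrary, then $f$ and $g$ are algebraically dependent over $\bbF_q$ if and only if $g=\phi(f)$ for some univariate polynomial $\phi$. This follows by an invertible affine change of coordinates taking $f$ to the coordinate $x$, after which dependence means $g$ is algebraic over $\bbF_q(x)$; since $\bbF_q(x)$ is algebraically closed in $\bbF_q(x,y)$ this forces $g\in\bbF_q(x)\cap\bbF_q[x,y]=\bbF_q[x]$, i.e.\ $g$ is a polynomial in $x$, hence in $f$. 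When $\deg g\le 2$ the polynomial $\phi$ has degree at most $2$, so this says precisely that the quadratic part of $g$ is a scalar multiple of the square of the linear part of $f$ and that the linear part of $g$ is parallel to $f$.

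Next I would fix coordinates. Since $q$ is odd I may write $Q(x,y,z)=\mathbf v^{\mathsf T}A\mathbf v+\mathbf b^{\mathsf T}\mathbf v+c$ with $A=(a_{ij})$ a symmetric $3\times 3$ matrix and $\mathbf v=(x,y,z)^{\mathsf T}$. Singling out $z$ as in the definition gives $P_1(x,y)=2a_{13}x+2a_{23}y+b_3$ and $P_2(x,y)$ equal to the $z$-free part of $Q$, and by the observation the pair $(P_1,P_2)$ is dependent exactly when either the $z$-cross-coefficients $(a_{13},a_{23})$ vanish, or $P_1$ is non-constant, the $xy$-block $\left(\begin{smallmatrix}a_{11}&a_{12}\\ a_{12}&a_{22}\end{smallmatrix}\right)$ is a scalar multiple of $(a_{13},a_{23})^{\mathsf T}(a_{13},a_{23})$, and $(b_1,b_2)$ is parallel to $(a_{13},a_{23})$; the analogous statements hold for $x$ and $y$. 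With this dictionary the ``forbidden implies not nice'' direction is immediate. Writing a forbidden $Q=g(h(x)+k(y)+\ell(z))$, a degree count forces either $g$ linear — so $Q$ is separable and $A$ is diagonal, making every $P_1$ constant — or $g$ quadratic with $h,k,\ell$ affine — so $Q=g(L)$ for a single affine form $L$, giving $A$ of rank $\le 1$ aligned with $L$ and $\mathbf b$ parallel to $L$, so each $(P_1,P_2)$ falls into the second, dependent case. A polynomial failing to depend on some variable is handled even more directly.

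The crux is the converse: assuming $Q$ is not nice and depends on all three variables, I must produce a representation $Q=g(h(x)+k(y)+\ell(z))$. If $A$ is diagonal then $Q$ is separable and we are done, so I may assume, after permuting variables, that $a_{12}\ne 0$. Then $P_1$ is non-constant for the choices $x$ and $y$, forcing both of these into the ``rank-$\le1$ block'' alternative; moreover the remaining choice $z$ cannot fall into the degenerate alternative $(a_{13},a_{23})=0$, since that would leave the $z$-dependence of $Q$ carried by a lone linear term $b_3 z$ that no polynomial in $P_1^{(x)}$ could match, contradicting dependence at $x$. Thus all three $2\times2$ blocks are rank $\le 1$ and suitably proportional; the main work is to assemble these three local conditions, together with the proportionality relations $a_{12}a_{13}=a_{11}a_{23}$ and $a_{12}a_{23}=a_{22}a_{13}$ they supply, into the global statement that all columns of $A$ are proportional, i.e.\ $A=c\,\mathbf u\mathbf u^{\mathsf T}$ with every entry of $\mathbf u$ non-zero. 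Finally the linear-alignment conditions $(b_1,b_2)\parallel(u_1,u_2)$ and $(b_2,b_3)\parallel(u_2,u_3)$ chain to give $\mathbf b=t\mathbf u$, whence $Q=c(\mathbf u^{\mathsf T}\mathbf v)^2+t(\mathbf u^{\mathsf T}\mathbf v)+c$ is a quadratic in the single linear form $u_1x+u_2y+u_3z$, which is the forbidden shape. I expect this assembly step — promoting the three per-variable rank-one conditions to global rank one while correctly discarding the degenerate options — to be the only delicate part; everything else is bookkeeping.
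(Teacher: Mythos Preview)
Your approach is correct and follows essentially the same line as the paper's proof: both reduce everything to the single key fact that if $P_1$ is nonconstant affine-linear then algebraic dependence of $(P_1,P_2)$ forces $P_2=\phi(P_1)$ for a univariate $\phi$, and both then run a case analysis on which off-diagonal coefficients vanish, arriving at the rank-one quadratic part plus aligned linear part in the generic case.

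There are two organisational differences worth noting. First, the paper proves the key dependence fact (its Lemma~\ref{algdepcri}) by a divisibility argument specific to $\deg P_2\le 2$, whereas you prove the more general statement via algebraic closedness of $\bbF_q(x)$ in $\bbF_q(x,y)$; your version is cleaner and characteristic-free. Second, the paper factors the argument through a separate homogeneous lemma (Lemma~\ref{homcase}) and then lifts to the inhomogeneous case, while you work directly with the symmetric matrix $A$ and the vector $\mathbf b$ throughout; this lets you avoid the intermediate step at the cost of slightly denser bookkeeping when chaining the three per-variable conditions into the global rank-one conclusion. Both routes arrive at the same computations.

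One small imprecision: when you rule out the degenerate alternative $(a_{13},a_{23})=(0,0)$ you say the $z$-dependence is ``carried by a lone linear term $b_3 z$'', but there may also be an $a_{33}z^2$ term. The argument you intend still goes through unchanged --- with $a_{13}=a_{23}=0$ the polynomial $P_1^{(x)}=2a_{12}y+b_1$ involves only $y$, while $P_2^{(x)}$ genuinely involves $z$ since $Q$ depends on $z$, so $P_2^{(x)}$ cannot be a polynomial in $P_1^{(x)}$ --- but the write-up should say this rather than asserting $a_{33}=0$.
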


Therefore, in the case where $d=2$, the niceness assumption in Theorem \ref{maingen} and Corollaries \ref{modgen} and \ref{notstronggen} can be replaced by the assumption above. This strengthens the main result in \cite{PVZ2019}, which only implies moderate expansion.

This assumption that $Q$ does not take the form $g(h(x)+k(y)+\ell(z))$ cannot be removed, and in fact Theorem \ref{maingen} and its corollaries fail for any quadratic polynomial that violates it. We sketch why this is the case assuming that $q = p$ is a sufficiently large prime and $$Q(x,y,z)=ax^2+by^2+cz^2,$$ where $a,b,c\in\bbF_p^\times$, but the general case follows similarly.
For $s\in\bbF_p$, denote by $r(s)$ the unique integer in $[0,p-1]$ that maps to $s$ under the canonical projection $\mathbb{Z}\to\bbF_p$. Define 
\begin{align*}
&X=\{x\in\bbF_p:
1 \leq r(ax^2) \leq p/4\}\text{.}
\\
&Y=\{y\in\bbF_p:
1 \leq r(by^2) \leq p/4\}\text{,}\\
&Z=\{z\in\bbF_p:
1\leq r(cz^2) \leq p/4\}
\text{.}
\end{align*}
It is a standard exercise in finite Fourier analysis to show using the P\'olya--Vinogradov inequality that
$$|X|,|Y|,|Z|=
\left(\frac{1}{4}+o(1)\right)p\text{.}$$
On the other hand, it is clear that, for any $x\in X$, $y\in Y$ and $z\in Z$, one has $1 \leq r(Q(x,y,z)) \leq 3p/4$ and hence $|Q(X,Y,Z)|\leq 3p/4$, which means that the conclusion of Corollary \ref{notstronggen} does not hold for $Q$.

The question of necessity for the hypothesis that $Q$ depends on each variable is much more delicate. Indeed, questions about expansion properties of binary polynomials are notoriously hard. The strongest results up to date in this regard can be found in \cite{Tao}.




\bigskip

We have not been able to establish similar expansion results for general ternary polynomials of higher degree. However, our approach does succeed in some natural cases.

\begin{thm}\label{conc} Let $d \in \bN$, let $q$ be a prime power. Then there exists a constant $C > 0$ such that the following holds. Let $a \in \bbF_q$, let $F, G \in \bbF_q[y,z]$ be algebraically independent, and let
\[
P(x,y,z) = ax^d + F(y,z)x + G(y,z).
\]
Let $X, Y, Z \subseteq \bbF_q$ satisfy
$|X|, |Y|, |Z| \ge Cq^{2/3}$. Then
\[
|P(X,Y,Z)| = q - O_{\deg(F), \deg(G)}
\left(
\frac{q^3}{|X|\cdot |Y| \cdot |Z|} \right)\text{.}
\]
\end{thm}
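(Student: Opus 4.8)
The plan is to reduce the problem to a point--line incidence estimate in $\bbF_q^2$. Consider the polynomial map $\Phi=(F,G)\colon\bbF_q^2\to\bbF_q^2$, and let $\cP=\Phi(Y\times Z)\subseteq\bbF_q^2$ be the (honest) set of values it takes on the grid $Y\times Z$. For $x\in X$ and $v\in\bbF_q$ the equation $P(x,y,z)=v$ reads $xF(y,z)+G(y,z)=v-ax^d$, so a point $(u,w)=\Phi(y,z)$ satisfies $P(x,y,z)=v$ precisely when it lies on the affine line $\ell_{x,v}=\{(u,w):xu+w=v-ax^d\}$. The key observation is that the term $ax^d$ only shifts the intercept, so these remain genuine lines; moreover distinct $x$ give distinct slopes $-x$, and the assignment $(x,v)\mapsto\ell_{x,v}$ is injective. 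Writing $B=\{v\in\bbF_q:v\notin P(X,Y,Z)\}$ for the set of missed values, by construction no point of $\cP$ lies on any line of the family $\cL_B=\{\ell_{x,v}:x\in X,\ v\in B\}$; that is, the incidence count satisfies $I(\cP,\cL_B)=0$, while $|\cL_B|=|X|\,|B|$.

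First I would apply Vinh's point--line incidence bound $\bigl|I(\cP,\cL_B)-|\cP|\,|\cL_B|/q\bigr|\le\sqrt{q\,|\cP|\,|\cL_B|}$. Since $I(\cP,\cL_B)=0$, this forces $|\cP|\,|\cL_B|\le q^3$, whence
\[
|B|\le\frac{q^3}{|X|\,|\cP|}.
\]
Thus the theorem follows once one shows that $\Phi$ is essentially injective on the grid, in the quantitative form $|\cP|\gg_{\deg F,\deg G}|Y|\,|Z|$: this yields $|B|\ll_{\deg F,\deg G}q^3/(|X|\,|Y|\,|Z|)$, and since $|P(X,Y,Z)|=q-|B|$ this gives exactly the claimed size. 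The hypotheses $|X|,|Y|,|Z|\ge Cq^{2/3}$ enter precisely here, guaranteeing $|X|\,|\cP|\gg q^2$ so that the bound is nontrivial. It is worth noting that, because the $x$-dependence merely translates intercepts, only the classical (degree-one) incidence bound is needed for this structured $P$, rather than the full higher-degree extension.

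The main obstacle is the lower bound $|\cP|\gg|Y|\,|Z|$, which is where algebraic independence is used. Writing $n_\Phi(p)=\#\{(y,z)\in Y\times Z:\Phi(y,z)=p\}$, so that $\sum_p n_\Phi(p)=|Y|\,|Z|$ and $|\cP|=\#\{p:n_\Phi(p)\ge1\}$, it suffices to bound the fibres. The plan is to split $Y\times Z$ according to whether $(y,z)$ lies on a positive-dimensional fibre of $\Phi$. For a point $p=(u,w)$ whose fibre $\Phi^{-1}(p)=V(F-u)\cap V(G-w)$ is finite, B\'ezout's theorem gives $n_\Phi(p)\le\deg F\cdot\deg G$. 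Algebraic independence of $F$ and $G$ ensures that $V(F-u)$ and $V(G-w)$ share a common component only for $(y,z)$ in a fixed curve $\Gamma\subseteq\bbF_q^2$ of degree $O_{\deg F,\deg G}(1)$ --- otherwise $G$ would be constant on a generic level curve of $F$, forcing a polynomial relation between $F$ and $G$ --- and such a curve meets $Y\times Z$ in $O_{\deg F,\deg G}(q)$ points. Hence, away from $O_{\deg F,\deg G}(q)$ exceptional points, each fibre has size at most $\deg F\cdot\deg G$, giving
\[
|\cP|\ge\frac{|Y|\,|Z|-O_{\deg F,\deg G}(q)}{\deg F\cdot\deg G}\gg_{\deg F,\deg G}|Y|\,|Z|,
\]
the last step using $|Y|\,|Z|\ge C^2q^{4/3}\gg q$. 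The delicate point, and the step I expect to require the most care, is the control of positive-dimensional fibres in positive characteristic, where the Jacobian criterion can fail (as the paper already notes in connection with niceness); I would handle it through a separability and degree analysis of the field extension $\bbF_q(y,z)/\bbF_q(F,G)$ rather than through the vanishing of $F_yG_z-F_zG_y$, so that only algebraic independence, and not a Jacobian hypothesis, is invoked.
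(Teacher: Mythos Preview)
Your proposal is correct and follows essentially the same strategy as the paper: reduce to a zero-incidence configuration, apply Vinh's point--line bound (not the higher-degree version), and show that the image of $(F,G)$ on $Y\times Z$ has size $\gg|Y|\,|Z|$ using algebraic independence. The only cosmetic difference is a point--line duality: the paper takes the \emph{points} to be $\psi(X\times W)$ with $\psi(x,w)=(x,w-ax^d)$ and the \emph{lines} to be the graphs $t=F(y,z)s+G(y,z)$, whereas you take the points to be $\Phi(Y\times Z)$ and the lines to be $\ell_{x,v}$; the resulting inequality $|\cP|\,|\cL|\le q^3$ is literally the same. For the fibre bound, the paper invokes its Lemma~\ref{bounddeg} (proved by a dimension-counting linear-algebra argument that is blind to characteristic), which directly gives a proper subvariety of bounded degree outside of which all fibres are finite---this sidesteps the separability analysis of $\bbF_q(y,z)/\bbF_q(F,G)$ you anticipated needing, and is a slightly cleaner way to close that step.
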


\bigskip

There are several results in the literature establishing expansion properties of specific families of higher-degree polynomials, further to the aforementioned work of Pham, Vinh, and de Zeeuw \cite{PVZ2019}. To quote some examples, very strong expansion for $x^2+xy+z$ was established by Skhredov \cite{Shkredov}, building upon work of Bourgain~\cite{Bourgain} on expansion for $x^2+xy$. Vinh \cite{Vinh2} proved moderate expansion results for quaternary polynomials of the form $f(x_1) + g(x_2)+x_3x_4$, $f(x_1)+g(x_2)+(x_3-x_4)^2$, $f(x_1)g(x_2)+x_3x_4$ and $f(x_1)g(x_2)+(x_3-x_4)^2$, under very general conditions on the univariate polynomials $f$ and $g$. In studying the Erd\H{o}s distance problem, Iosevich and Rudnev \cite{IR} demonstrated very strong expansion for
\[
\sum_{i \le d} (x_i - y_i)^2
\]
whenever $d \ge 2$.

The study of expanding polynomials and that of incidence geometry often go hand in hand. Our proof makes use of the following incidence geometry result of Vinh \cite{Vinh1}, which was proved using methods from spectral graph theory that can be found in \cite{AlonSpencer}.

\begin{thm} [Vinh]
\label{VinhThm}
Let $\cP$ be a set of points and $\cL$ be a set of lines in $\bbF_q^2$. Then
\[
\left|
\# \{ (\fp, \fl) \in \cP \times \cL: \fp \in \fl \} - 
\frac{|\cP| \cdot |\cL|}{q} 
\right|
\le
q^{1/2} \sqrt{|\cP|\cdot |\cL|}.
\]
\end{thm}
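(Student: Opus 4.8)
The plan is to prove this by spectral graph theory, realizing the incidence count as a bilinear form in the incidence matrix of the affine plane and controlling it via an expander-mixing argument. First I would set up the full bipartite incidence structure: let $N$ be the $0/1$ matrix whose rows are indexed by all $q^2$ points of $\bbF_q^2$ and whose columns are indexed by all $q^2+q$ lines of $\bbF_q^2$, with $N_{\fp,\fl}=1$ precisely when $\fp\in\fl$. Writing $\mathbf{1}_{\cP}$ and $\mathbf{1}_{\cL}$ for the indicator vectors of the given point and line sets (in point-space and line-space respectively), the quantity $\#\{(\fp,\fl)\in\cP\times\cL:\fp\in\fl\}$ is exactly the bilinear form $\mathbf{1}_{\cP}^{T}N\mathbf{1}_{\cL}$. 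The task thus reduces to understanding the singular values of $N$.

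The key computation is the Gram matrix $NN^{T}$, whose $(\fp,\fp')$ entry counts the lines passing through both $\fp$ and $\fp'$. Using the two basic incidence axioms of the affine plane — each point lies on exactly $q+1$ lines, and two distinct points determine a unique line — one finds $NN^{T}=qI+J$, where $I$ is the identity and $J$ the all-ones matrix. Since $J$ has eigenvalues $q^2$ (simple, with the all-ones eigenvector) and $0$, the matrix $NN^{T}$ has eigenvalues $q^2+q$ (simple) and $q$ (multiplicity $q^2-1$); equivalently, $N$ has top singular value $\sqrt{q^2+q}$ and all remaining singular values equal to $\sqrt{q}$. A short check identifies the normalized all-ones vectors $\mathbf{u}$ in point-space and $\mathbf{v}$ in line-space as the top singular pair, since $N$ applied to the all-ones line-vector returns $(q+1)$ times the all-ones point-vector, reflecting the constant degree of each point.

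With the spectrum in hand I would decompose
\[
\mathbf{1}_{\cP}=\alpha\mathbf{u}+\mathbf{a},\qquad \mathbf{1}_{\cL}=\beta\mathbf{v}+\mathbf{b},
\]
where $\mathbf{a}\perp\mathbf{u}$ and $\mathbf{b}\perp\mathbf{v}$, and where $\alpha=|\cP|/q$ and $\beta=|\cL|/\sqrt{q^2+q}$. Because $\mathbf{u}$ and $\mathbf{v}$ are genuine left/right singular vectors, the two cross terms vanish, leaving $\mathbf{1}_{\cP}^{T}N\mathbf{1}_{\cL}=\alpha\beta\sqrt{q^2+q}+\mathbf{a}^{T}N\mathbf{b}$. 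The principal term evaluates cleanly to $|\cP|\cdot|\cL|/q$, matching the asserted main term. For the remainder, $\mathbf{b}\perp\mathbf{v}$ forces $\|N\mathbf{b}\|\le\sqrt{q}\,\|\mathbf{b}\|$, so Cauchy--Schwarz together with $\|\mathbf{a}\|\le\|\mathbf{1}_{\cP}\|=\sqrt{|\cP|}$ and $\|\mathbf{b}\|\le\sqrt{|\cL|}$ gives $|\mathbf{a}^{T}N\mathbf{b}|\le q^{1/2}\sqrt{|\cP|\cdot|\cL|}$, which is exactly the claimed error bound.

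I do not expect a serious obstacle, as the statement is classical; the one point demanding genuine care is the spectral gap, namely verifying that every non-principal singular value of $N$ equals exactly $\sqrt{q}$ rather than merely $O(\sqrt{q})$, since this pins down the constant $1$ in front of $q^{1/2}$. This rests entirely on the clean identity $NN^{T}=qI+J$, which is special to the incidence geometry of the plane and would degrade for more general point--line configurations. I would also take a moment to confirm that the affine line count $q^2+q$ (as opposed to a projective normalization) is the correct one, so that the principal term emerges as $|\cP|\cdot|\cL|/q$ on the nose.
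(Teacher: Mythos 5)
Your proof is correct, and it is essentially the argument the paper points to: the paper does not prove Theorem \ref{VinhThm} itself but quotes it from \cite{Vinh1}, attributing the proof to spectral graph theory as in \cite{AlonSpencer}, and your argument --- computing $NN^{T}=qI+J$ for the affine point--line incidence matrix, extracting the singular values $\sqrt{q^2+q}$ and $\sqrt{q}$, and applying the expander-mixing decomposition to the indicator vectors --- is precisely Vinh's spectral proof. All the key verifications (the incidence axioms, the identification of the all-ones vectors as the top singular pair, the vanishing cross terms, and the bound $\|N\mathbf{b}\|\le\sqrt{q}\,\|\mathbf{b}\|$ on the complement) check out, so there is nothing to correct.
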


\subsection*{Organisation}

In \S \ref{IncidenceGeometry}, we use Vinh's result in incidence geometry --- Theorem \ref{VinhThm} --- to obtain a higher-degree generalisation. Then, in \S \ref{MainProof}, we use this to prove our main result, Theorem \ref{maingen}. That section also contains a proof of Lemma \ref{nicequad}, classifying nice quadratic polynomials in three variables. Finally, in \S \ref{TernaryHigher}, we prove Theorem \ref{conc}, on expanding ternary polynomials of higher degree.

\subsection*{Funding}

NA was funded through the Engineering
and Physical Sciences Research Council Doctoral Training Partnership at the
University of Warwick.

\subsection*{Acknowledgements}

We thank Akshat Mudgal and George Shakan for helpful discussions.

\subsection*{Rights}

For the purpose of open access, the authors have applied a Creative Commons Attribution (CC-BY) licence to any Author Accepted Manuscript version arising from this submission.

\section{Incidence geometry}
\label{IncidenceGeometry}

In this section, we prove a result on incidences between points and graphs of polynomials of a fixed degree, which will be central to the proof of the main results. This is a higher-degree generalisation of Theorem \ref{VinhThm}, and might be of independent interest.

\begin{thm} \label{higher}
Let $n \in \bN$, and let $q>n$ be a prime power. Let $\cP$ be a set of points in $\bbF_q^2$, and $\cQ$ a collection of subsets of $\bbF_q^2$ of the shape
\[
\{ (x,y) \in \bbF_q^2:
y = a_n x^n + \cdots + a_0 \}\text{.}
\]
Then
\[
\left|
\# \{ (\fp, \fq) \in \cP \times \cQ: \fp \in \fq \} -
\frac{|\cP| \cdot |\cQ|}{q} 
\right|
\le
q^{n/2} \sqrt{|\cP|\cdot |\cQ|}.
\]
\end{thm}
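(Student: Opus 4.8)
The goal is to generalize Vinh's incidence bound (Theorem~\ref{VinhThm}) from lines to graphs of degree-$n$ polynomials. The plan is to mimic Vinh's spectral approach via a bipartite incidence graph. I would construct a bipartite graph $\cB$ whose two vertex classes are the set of all points $\bbF_q^2$ on one side and the set of all monic-coefficient graphs $\{(x,y):y=a_nx^n+\cdots+a_0\}$ (parametrized by $(a_0,\ldots,a_n)\in\bbF_q^{n+1}$) on the other, joining a point to a curve whenever the point lies on the curve. The key structural observation is that this incidence graph should be biregular and, crucially, that its adjacency operator has a large spectral gap. The desired incidence count between arbitrary subsets $\cP$ and $\cQ$ is then read off from the edge count of the induced subgraph, and the error term comes from the expander mixing lemma (the version in \cite{AlonSpencer}): for a biregular bipartite graph the deviation of edges between two vertex subsets from the expected density is controlled by the second-largest singular value of the adjacency matrix times $\sqrt{|\cP||\cQ|}$.

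Concretely, I would carry out the following steps. First, verify the biregularity: each point lies on exactly $q^n$ of the curves (given a point $(x_0,y_0)$, the equation $y_0=a_nx_0^n+\cdots+a_0$ is a single linear constraint on $(a_0,\ldots,a_n)\in\bbF_q^{n+1}$, leaving $q^n$ free solutions), and each curve contains exactly $q$ points (one for each $x\in\bbF_q$). Second, and this is the heart of the matter, compute the eigenvalues of $\cB$ by analyzing $A^{\mathsf T}A$ or equivalently by a character-sum count of the number of curves through a given pair of points. The count of curves passing through two distinct points $(x_1,y_1)$ and $(x_2,y_2)$ is governed by whether the Vandermonde-type linear system $\sum_{k=0}^n a_k x_i^k = y_i$ (for $i=1,2$) is consistent and by its solution dimension: since $x_1\neq x_2$ forces the two constraint rows to be linearly independent (here the hypothesis $q>n$, or rather distinctness of the abscissae, guarantees the relevant $2\times(n+1)$ coefficient matrix has rank $2$), any two points with distinct $x$-coordinates lie on exactly $q^{n-1}$ common curves, while two points with equal $x$-coordinate but distinct $y$-coordinate lie on none. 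This uniformity of codegrees is exactly what pins down the nontrivial singular values.

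From the codegree computation, the Gram matrix $A^{\mathsf T}A$ (indexed by points) has the form $q^n I + q^{n-1}(J - \text{block diagonal correction})$, and diagonalizing this yields that the nontrivial singular values of $A$ all equal $q^{n/2}$. Plugging $\lambda = q^{n/2}$, point-degree $q^n$, and the total sizes into the expander mixing lemma gives
\[
\left|\#\{(\fp,\fq)\in\cP\times\cQ:\fp\in\fq\}-\frac{|\cP|\cdot|\cQ|}{q}\right|\le q^{n/2}\sqrt{|\cP|\cdot|\cQ|},
\]
as required. I expect the main obstacle to be the clean eigenvalue computation in the second step: one must correctly handle the ``vertical pair'' case (points sharing an $x$-coordinate, which never appear together on any graph of a function) so that the correction term in the Gram matrix is genuinely block-structured rather than a clean multiple of $J$. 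Organizing this bookkeeping so that the off-diagonal contributions collapse to a single repeated singular value $q^{n/2}$ — rather than a messy spectrum — is the delicate point, and it is precisely where the functional-graph structure (each curve being a graph $y=f(x)$, so no two distinct $y$ over the same $x$) does the essential work.
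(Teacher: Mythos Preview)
Your spectral approach is correct and will work: the codegree computation you outline does collapse cleanly, giving $AA^{\mathsf T}=q^{n-1}(J-B)+q^nI$ with $B$ block-diagonal (one $q\times q$ all-ones block per abscissa), and the nontrivial singular values are all exactly $q^{n/2}$, so the expander mixing lemma delivers the stated bound. The one place where you invoke $q>n$ is slightly misplaced: distinctness of abscissae already forces the $2\times(n+1)$ system to have rank $2$; the hypothesis $q>n$ is actually needed so that the map $(a_0,\ldots,a_n)\mapsto\fq_{\mathbf a}$ is injective, i.e.\ so that curves-as-subsets and coefficient tuples are in bijection and $|\cQ|$ is well defined on either side.

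The paper, however, takes a much shorter and more modular route. Rather than rebuilding the spectral machinery from scratch, it fibres $\cQ$ over the top $n-1$ coefficients $\mathbf a=(a_n,\ldots,a_2)$: for each such $\mathbf a$ the shear $\psi_{\mathbf a}(x,y)=(x,\,y-a_nx^n-\cdots-a_2x^2)$ sends the curves in the slice $\cQ_{\mathbf a}$ to genuine lines $y=a_1x+a_0$, so Vinh's Theorem~\ref{VinhThm} applies verbatim to $\psi_{\mathbf a}(\cP)$ and that family of lines. Summing the resulting bounds over $\mathbf a\in\bbF_q^{n-1}$ via the triangle inequality and a single Cauchy--Schwarz step $\sum_{\mathbf a}\sqrt{|\cQ_{\mathbf a}|}\le q^{(n-1)/2}\sqrt{|\cQ|}$ yields exactly $q^{n/2}\sqrt{|\cP|\,|\cQ|}$. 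Your approach is more self-contained and explains structurally why the second singular value is $q^{n/2}$; the paper's approach is quicker, treats Vinh's bound as a black box, and makes transparent that the extra factor of $q^{(n-1)/2}$ over the line case comes purely from the Cauchy--Schwarz loss across $q^{n-1}$ slices.
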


\begin{proof}
For each $\mathbf{a}=(a_n,\ldots,a_0)\in\bbF_q^{n+1}$, let 
\[
\fq_\mathbf{a}=\{ (x,y) \in \bbF_q^2:
y = a_n x^n + \cdots + a_0 \}.
\]
We observe that the condition $q>n$ implies that the set $\fq_\mathbf{a}$ uniquely determines the vector $\mathbf{a}$.

For each $\mathbf{a}\in\bbF_q^{n-1}$, let
$$\cQ_\mathbf{a}=\cQ\cap\{\fq_{\mathbf{a},a_1,a_0}:a_1,a_0\in\bbF_q\}\text{.}$$
For each $\mathbf{a}\in\bbF_q^{n-1}$ let $\psi_\mathbf{a}:\bbF_q^2\to\bbF_q^2$ be defined by
$$\psi_\mathbf{a}(x,y)=(x,y-a_nx^n-\cdots-a_2x^2)\text{.}$$
Theorem \ref{VinhThm}, applied to the set of points $\psi_\mathbf{a}(\cP)$ and the set of lines given by $y=a_1x+a_0$ with $\fq_{\mathbf{a},a_1,a_0}\in\cQ$, implies that for each $\mathbf{a}\in\bbF_q^{n-1}$,
$$\left|\# \{ (\fp, \fq) \in \cP \times \cQ_{\mathbf{a}}: \fp \in \fq \} -
\frac{|\cP| \cdot |\cQ_\mathbf{a}|}{q} 
\right|
\le
q^{1/2} \sqrt{|\cP|\cdot |\cQ_\mathbf{a}|}\text{.}$$
Now
\begin{align*}
&\left|
\# \{ (\fp, \fq) \in \cP \times \cQ: \fp \in \fq \} -
\frac{|\cP| \cdot |\cQ|}{q} 
\right|\\
&\leq\sum_{\mathbf{a}\in\bbF_q^{n-1}}\left|\# \{ (\fp, \fq) \in \cP \times \cQ_{\mathbf{a}}: \fp \in \fq \} -
\frac{|\cP| \cdot |\cQ_\mathbf{a}|}{q} 
\right|\\
&\leq q^{1/2}\sqrt{|\cP|}\sum_{\mathbf{a}\in\bbF_q^{n-1}}\sqrt{|\cQ_{\mathbf{a}}|}\\
&\leq q^{n/2}\sqrt{|\cP|\cdot\sum_{\mathbf{a}\in\bbF_q^{n-1}}|\cQ_\mathbf{a}|}= q^{n/2}\sqrt{|\cP|\cdot|\cQ|}\text{,}
\end{align*}
where the last inequality follows from Cauchy--Schwarz.
\end{proof}

\section{Proof of the main result}
\label{MainProof}

In this section, we establish Theorem \ref{maingen} and Lemma \ref{nicequad}. We will need the following preliminary result.

\begin{lemma}
\label{bounddeg}
Let $D,n \in \bN$. Then there exists a constant $C_{D,n}$ for which the following holds. Let $\bbF$ be a field, and let $\varphi:\bA_{\bbF}^n\to\bA_{\bbF}^n$ be a dominant morphism given by polynomials of degree at most $D$. Then there exists a proper subvariety $W\subseteq \bA_{\bbF}^n$ of degree at most $C_{D,n}$ such that, for any $\mathbf{y}\in \bA_{\bbF}^n\setminus W$, the fibre of $\varphi$ over $\mathbf{y}$ is zero-dimensional.
\end{lemma}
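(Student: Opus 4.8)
The plan is to reduce the existence of $W$ to a single, characteristic-free degree bound obtained by eliminating variables on the graph of $\varphi$. Since dominance, fibre dimension and degrees of subvarieties may all be tested after base change to the algebraic closure, I would first replace $\bbF$ by $\overline{\bbF}$, noting that the subvariety $W$ constructed below is automatically defined over $\bbF$ because all the data defining it are. We may assume $D\ge1$, as otherwise $\varphi$ is constant and hence not dominant. Writing $\varphi=(\varphi_1,\ldots,\varphi_n)$, consider its graph
\[
\Gamma=\{(\bx,\by)\in\bA_{\bbF}^{2n}:\by=\varphi(\bx)\}.
\]
Via the projection to $\bx$, $\Gamma$ is isomorphic to $\bA^n$, so it is irreducible of dimension $n$; and since it is cut out by the $n$ polynomials $y_i-\varphi_i(\bx)$, each of total degree at most $D$, the B\'ezout inequality gives $\deg\Gamma\le D^n$.

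Next, for each $i$ I would consider the linear projection
\[
\rho_i\colon\Gamma\to\bA_{\bbF}^{n+1},\qquad(\bx,\by)\mapsto(\by,x_i).
\]
Composing $\rho_i$ with the projection onto the $\by$-coordinates recovers $\pi\colon\Gamma\to\bA_{\bbF}^n$, which is dominant (because $\varphi$ is) between irreducible varieties of equal dimension $n$, and hence generically finite. Therefore $\overline{\rho_i(\Gamma)}$ already dominates $\bA_{\bbF}^n$, so it has dimension $n$ and is thus a hypersurface $Z_i=\{g_i=0\}\subseteq\bA_{\bbF}^{n+1}$; moreover $g_i$ must genuinely involve the last coordinate $X_i$, since otherwise $Z_i$ would be a cylinder of dimension $n-1$ over a hypersurface in $\bA_{\bbF}^n$ and could not dominate $\bA_{\bbF}^n$. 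As the degree of the closed image of a variety under a linear projection does not exceed the degree of the variety, I obtain $\deg g_i\le\deg\Gamma\le D^n$.

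Writing $g_i=\sum_{j=0}^{e_i}c_{i,j}(\by)X_i^{\,j}$ with $e_i\ge1$ and leading coefficient $c_{i,e_i}\not\equiv0$, I would then set
\[
W=\bigcup_{i=1}^{n}\{\by:c_{i,e_i}(\by)=0\}\subseteq\bA_{\bbF}^n,
\]
a subvariety of degree at most $\sum_{i=1}^n\deg c_{i,e_i}\le nD^n$, so that $C_{D,n}=nD^n$ works. To see that fibres over $\by\notin W$ are zero-dimensional, take any $\bx$ with $\varphi(\bx)=\by$; then $(\by,x_i)\in Z_i$, that is $g_i(\by,x_i)=0$. Since $c_{i,e_i}(\by)\ne0$, the univariate polynomial $g_i(\by,X_i)$ is nonzero of degree $e_i\ge1$, so $x_i$ is confined to at most $e_i$ values. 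As this holds for each $i$, the fibre $\varphi^{-1}(\by)$ is finite, and in particular zero-dimensional.

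The only substantive issue is the \emph{uniformity} of the bound in both $\bbF$ and $\varphi$: the existence of some proper $W$ outside of which the fibres are finite is immediate from upper semicontinuity of fibre dimension, but that argument yields no control on $\deg W$. Passing to the graph is precisely what converts the problem into two robust, characteristic-free degree estimates --- the B\'ezout bound for $\Gamma$ and the non-increase of degree under projection --- which I would quote from a standard reference. The one place that demands genuine care is checking that each $Z_i$ is a hypersurface that truly involves $X_i$; this is exactly where the generic finiteness of $\pi$, coming from dominance together with the equality of dimensions, is used.
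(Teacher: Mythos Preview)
Your argument is correct, and at the structural level it matches the paper's: both produce, for each coordinate $x_i$, a nonzero polynomial relation $g_i(\by,x_i)=0$ valid whenever $\by=\varphi(\bx)$, and then take $W$ to be the union over $i$ of the vanishing loci of the $x_i$-leading coefficients. The difference is in how the degree of $g_i$ is controlled. You pass to the graph $\Gamma\subseteq\bA^{2n}$, bound $\deg\Gamma\le D^n$ by B\'ezout, and then invoke the fact that linear projection does not increase degree to get $\deg g_i\le D^n$, hence $C_{D,n}=nD^n$. The paper instead argues directly and elementarily: it looks for $Q_k$ of degree at most $M$ with $Q_k(x_k,P_1(\bx),\ldots,P_n(\bx))\equiv 0$, observes this is a linear system in the coefficients of $Q_k$ with $\binom{M+n+1}{n+1}$ unknowns and at most $\binom{DM+n}{n}$ equations, and chooses $M=M(D,n)$ so that the former exceeds the latter, yielding $C_{D,n}=nM$. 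Your route gives a cleaner explicit constant and is conceptually tidy, but leans on two black boxes (the B\'ezout inequality and the projection--degree bound); the paper's route is entirely self-contained over an arbitrary field and avoids any projective machinery.
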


\begin{proof}
Let $\varphi$ be given by
$$\varphi(\mathbf{x})=(P_1(\mathbf{x}),\ldots,P_n(\mathbf{x}))
\text{.}$$
The condition that $\varphi$ is dominant is equivalent to the statement that $P_1,\ldots,P_n$ are algebraically independent.

Let $M > 0$ be a parameter to be chosen later. For each $1\leq k\leq n$, we seek a polynomial $Q_k$ of degree at most $M$ such that 
\begin{equation}
\label{polyrel}
Q_k(x_k,P_1(\mathbf{x}),\ldots,P_n(\mathbf{x}))=0
\end{equation}
for any $\mathbf{x}=(x_1,\ldots,x_n)\in\bA_{\bbF}^n$. In order to find such a polynomial, we use linear algebra. There are $\binom{M+n+1}{n+1}$ coefficients to be determined. Moreover, since each $P_i$ has degree at most $D$, the left hand side of Equation \eqref{polyrel} is a polynomial of degree at most $DM$ in $\mathbf{x}$, whence imposing \eqref{polyrel} amounts to requiring that these $\binom{M+n+1}{n+1}$ coefficients satisfy a system of at most $\binom{DM+n}{n}$ equations. Choosing $M$ depending only on $D$ and $n$ such that $$\binom{M+n+1}{n+1}>\binom{DM+n}{n}\text{,}$$ which is clearly possible, this system of linear equations is guaranteed to have a non-trivial solution. Since $P_1,\ldots,P_n$ are algebraically independent, the coefficient of some term involving $x_k$ is non-zero.

Now let $\mathbf{y}=(y_1,\ldots,y_n)\in\bA_{\bbF}^n$. If for all $1\leq k\leq n$ the univariate polynomial $Q_k(x,y_1,\ldots,y_n)$ is not constant, then \eqref{polyrel} implies that, over all $\mathbf{x}=(x_1,\ldots,x_n)$ such that $\varphi(\mathbf{x})=\mathbf{y}$, there are only finitely many possible values of $x_k$, and hence the set of such $\mathbf{x}$ is finite. Therefore, if the fibre of $\varphi$ over $\mathbf{y}$ is not zero-dimensional, then $\mathbf{y}$ lies on the vanishing locus of the leading coefficient of $Q_k(x_k,\mathbf{y})$, regarded as a polynomial in $\mathbf{y}$. This implies that such $\mathbf{y}$ lie in the union of $n$ hypersurfaces in $\bA_{\bbF}^n$, each having degree at most $M$. Taking $W$ to be this union, of degree at most $nM=:C_{D,n}$, this proves the lemma.
\end{proof}
\begin{proof}[Proof of Theorem \ref{maingen}]

Assume $P$ is nice, and without loss of generality write
$$P(x_1,\ldots,x_{d+1})=ax_{d+1}^d+\sum_{j=1}^d P_j(x_1,\ldots,x_{d})
x_{d+1}^{d-j}$$
with $P_1,\ldots,P_d$ algebraically independent. Define $\varphi:\bbF_q^d\to\bbF_q^d$ by
$$\varphi(x_1,\ldots,x_d)=(P_1(x_1,\ldots,x_d),\ldots,P_d(x_1,\ldots,x_d))\text{.}$$
For each $\mathbf{a}=(a_1,\ldots,a_d)\in\bbF_q^d$, let
$$\fq_\mathbf{a} = 
\{ (s,t) \in \bbF_q^2:
t=a_1s^{d-1} +\cdots+ a_{d-1}s+a_d \} \text{.}$$
We assume as we may that $q \ge d$, so that $\fq_\ba$ uniquely determines $\ba$.

We claim that, for an appropriate choice of $C$ in the statement of Theorem~\ref{maingen}, there exists $c>0$ such that if
$$\mathcal{Q}_{X_1,\ldots, X_d}=\{\fq_{\varphi(\bx)}:\bx\in X_1\times\cdots\times X_d\}$$
then
$$|\mathcal{Q}_{X_1,\ldots, X_d}| \geq c|X_1|\cdots|X_d|
\text{.}$$
To this end, we argue as follows. Identify $\bbF_q^d$ with affine space $\bA_{\bbF_q}^d$. The condition that $P_1,\ldots,P_d$ are algebraically independent implies that the map $\varphi$ is dominant, i.e. its image is not contained in a proper subvariety of $\bA_{\bbF_q}^d$. Therefore, if we denote by $k=O_d(1)$ the degree of $\varphi$, there exists a proper subvariety $X\subseteq\bA_{\bbF_q}^d$, of degree $O_d(1)$ by Lemma \ref{bounddeg}, such that the fibre of $\varphi$ over $\mathbf{a}$ is finite whenever $\mathbf{a}\notin X$. Since $\varphi$ is at most $k$-to-$1$ outside $\varphi^{-1}(X)$, it follows that
\begin{align*}
|\mathcal{Q}_{X_1,\ldots, X_d}|&\geq|\varphi((\bA_{\bbF_q}^d\setminus\varphi^{-1}(X))(\bbF_q)\cap (X_1\times\cdots\times X_d))|\\
&\geq\frac{1}{k}|(\bA_{\bbF_q}^d\setminus\varphi^{-1}(X))(\bbF_q)\cap (X_1\times\cdots\times X_d)|\\
&\geq\frac{1}{k}(|X_1|\cdots|X_d|-|\varphi^{-1}(X)(\bbF_q)|)\\
&=\frac{1}{k}(|X_1|\cdots|X_d| + O_d(q^{d-1}))\text{.}
\end{align*}
In the final line we used that $\varphi^{-1}(X)$, being a proper subvariety of $\bA_{\bbF_q}^d$, has dimension at most $d-1$, and its degree is bounded in terms of the degrees of $X$ and $\varphi$. It is now clear that, for sufficiently large $C$, if $|X_i|\geq Cq^{d/(d+1)}$ for all $i$, then the above is at least $c|X_1|\cdots|X_d|$ for a suitably small constant $c > 0$.

Next, let $W=\bbF_q\setminus P(X_1,\ldots,X_{d+1})$, and define $\psi:\bbF_q^2\to\bbF_q^2$ by 
$$\psi(x,w)=(x,w-ax^d)\text{.}$$
Note that $\psi$ is bijective. Moreover, from the definition of $W$ it follows that, for any $w\in W$, $x\in X_{d+1}$ and $\bx=(x_1,\ldots,x_{d})\in X_1\times\cdots\times X_{d}$, the point $\psi(x,w)$ does not lie in $\fq_{\varphi(\bx)}$. 
For $\cP_{X_{d+1},W}=\psi(X_{d+1}\times W)$, we thus have 
$$\{ (\fp, \fq) \in \cP_{X_{d+1},W} \times \cQ_{X_1,\ldots,X_d}: \fp \in \fq \}=\emptyset$$
and, by the injectivity of $\psi$, \begin{equation}
\label{xw}
|\cP_{X_{d+1},W}|=|X_{d+1}|\cdot |W|\text{.}
\end{equation} 

It follows from Theorem \ref{higher} that
$$\frac{|\cP_{X_{d+1},W}|\cdot|\cQ_{X_1,\ldots,X_d}|}{q}\leq q^{(d-1)/2}\sqrt{|\cP_{X_{d+1},W}|\cdot|\cQ_{X_1,\ldots,X_d}|}\text{,}$$
which rearranges to
$$|\cP_{X_{d+1},W}|\cdot|\cQ_{X_1,\ldots,X_d}|\leq q^{d+1}\text{.}$$
Using \eqref{xw} and the earlier conclusion that $|\cQ_{X_1,\ldots,X_d}|\gg|X_1|\cdots|X_d|$, this yields
$$|W|\cdot|X_1|\cdots|X_{d+1}|\ll q^{d+1}$$
which, since $|W|=q-|P(X_1,\ldots,X_{d+1})|$, simplifies to the desired result.
\end{proof}

We now prove Lemma \ref{nicequad}, establishing the more symmetric and satisfactory description of the `nice' condition in the case of quadratic polynomials. The proof is a long computation and, in order to make it easier to parse, we prove first a version for homogeneous polynomials and then deduce the general version from it.

\begin{lemma}
\label{homcase}
Let $q$ be an odd prime power and let $Q\in\bbF_q[x,y,z]$ be a ternary quadratic form. Then $Q$ is nice if and only if the following conditions hold:
\begin{enumerate}[label=(\roman*)]
\item $Q$ is non-diagonal;
\item $Q$ is \emph{genuinely ternary}, in the sense that each variable appears in a monomial having non-zero coefficient in $Q$;
\item $Q$ is not a square in $\overline{\bbF_q}[x,y,z]$.
\end{enumerate}
\end{lemma}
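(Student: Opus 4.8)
The plan is to translate niceness into an explicit condition on the coefficients of $Q$ and then match that condition against (i)--(iii) by a short case analysis. Write $Q = Ax^2+By^2+Cz^2+Dxy+Exz+Fyz$, and recall that $Q$ is nice precisely when, for at least one of the three choices of distinguished variable, the polynomials $P_1,P_2$ of the definition are algebraically independent over $\bbF_q$ (permuting the two non-distinguished variables is irrelevant to independence). Taking $z$ as the distinguished variable gives $Q = Cz^2 + P_1 z + P_2$ with $P_1 = Ex+Fy$ and $P_2 = Ax^2+Dxy+By^2$. My first step is to prove that $P_1$ and $P_2$ are algebraically independent if and only if the Jacobian $J_z := \partial(P_1,P_2)/\partial(x,y)$, a linear form in $x,y$, does not vanish identically.

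The delicate direction is that a vanishing Jacobian must force algebraic dependence; in positive characteristic this implication can fail for general polynomials, and this is the main obstacle. I would sidestep the general theory by exploiting the low degrees. First, I claim that $P_1$ and $P_2$ are algebraically dependent if and only if $P_1 = 0$ or $P_2 = cP_1^2$ for some $c \in \bbF_q$. Indeed, when $P_1 \neq 0$ a linear change of variables over $\bbF_q$ turns $P_1$ into a coordinate $u$; writing $P_2$ as a polynomial in the complementary coordinate $v$ over the field $\bbF_q(u)$, algebraicity of $P_2$ over $\bbF_q(u)$ forces $P_2$ to have degree $0$ in $v$, that is $P_2 = cu^2 = cP_1^2$. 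This step is characteristic-free, because a nonconstant polynomial in a transcendental element is itself transcendental. Second, a direct computation --- where it matters that $2$ is invertible since $q$ is odd --- shows that $P_1 = 0$ or $P_2 = cP_1^2$ holds if and only if $J_z = 0$. Combining the two gives the desired criterion.

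Running the same analysis for all three choices of distinguished variable, I would record the three Jacobians as the linear forms
$$J_z = ux - vy, \qquad J_y = ux + wz, \qquad J_x = vy + wz,$$
where $u = ED - 2AF$, $v = DF - 2BE$ and $w = 2CD - EF$. Because $x,y,z$ are independent variables, each $J$ vanishes exactly when both of its coefficients vanish, so $Q$ is nice if and only if $(u,v,w) \neq (0,0,0)$. Since $q$ is odd, $Q$ has a symmetric Gram matrix $M$, and one checks that $u,v,w$ are, up to a nonzero factor and signs, the three off-diagonal entries of the adjugate of $M$; thus niceness is the assertion that this adjugate is not diagonal.

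It remains to show that $(u,v,w) \neq (0,0,0)$ is equivalent to (i)--(iii). One direction is immediate: if $Q$ is diagonal ($D=E=F=0$), or some variable is absent, or $\rank(M) \leq 1$ (so the adjugate, and hence $u,v,w$, vanishes), then $(u,v,w)=(0,0,0)$. For the converse I would assume (i)--(iii) together with $(u,v,w)=(0,0,0)$ and split according to how many of $D,E,F$ are nonzero. If exactly one is nonzero, the relations $u=v=w=0$ force a whole variable to vanish, contradicting genuine ternariness; if exactly two are nonzero, then one of $u,v,w$ reduces to a nonzero product of cross-coefficients, a contradiction; and if all three are nonzero, the relations pin down $A,B,C$, whereupon a direct computation shows that every $2\times 2$ minor of $M$ vanishes, so $\rank(M)=1$ and $Q$ is a square, contradicting (iii). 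This establishes the lemma. The only genuinely delicate point is the Jacobian criterion of the first two paragraphs; after that the argument is bookkeeping with $J_x,J_y,J_z$ and a finite case check.
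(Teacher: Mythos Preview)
Your argument is correct. The route differs from the paper's in a meaningful way, so a brief comparison is in order.

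The paper works directly with the notion of algebraic dependence: assuming $Q$ is not nice, it splits according to how many of the three cross--coefficients are nonzero and, in each case, reads off the consequences of ``$P_1,P_2$ dependent'' ad hoc (e.g.\ ``$fy$ and $by^2+cz^2$ dependent forces $y\mid by^2+cz^2$, hence $c=0$''), eventually landing on a failure of (i), (ii) or (iii). No Jacobians or Gram matrices appear; the case ``exactly two cross terms nonzero'' is ruled out by exhibiting an explicit pair that cannot be dependent.

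Your approach first converts the dependence question into a coefficient condition once and for all: you prove, by the low--degree change--of--variables argument, that for a nonzero linear $P_1$ and homogeneous quadratic $P_2$ dependence is equivalent to $P_2=cP_1^2$, and you identify this with vanishing of the $2\times2$ Jacobian (carefully noting why the usual Jacobian converse, which can fail in positive characteristic, does hold here). This packages niceness as the single condition $(u,v,w)\neq(0,0,0)$, and your observation that $u,v,w$ are, up to harmless factors, the off--diagonal entries of the adjugate of the Gram matrix of $Q$ immediately explains the ``easy'' direction (diagonal, a missing variable, or $\mathrm{rank}\le 1$ all kill the adjugate). The residual case analysis on the number of nonzero cross terms is then essentially the same as the paper's, but with the algebra already distilled into the three scalar equations $u=v=w=0$.

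What your version buys is a cleaner structural statement---``$Q$ is nice iff the adjugate of its Gram matrix is non--diagonal''---and a reusable Jacobian lemma for the linear/quadratic pair; the paper's version buys brevity, since it avoids setting up $u,v,w$ and the adjugate and simply argues case by case. Both use that $q$ is odd in the same essential place (halving cross terms), and both finish the ``all three cross terms nonzero'' case by solving for the diagonal coefficients and recognising a perfect square.
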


\begin{proof}
It is straightforward to show that if $Q$ does not satisfy (i), (ii) and (iii) then $Q$ is not nice. We will now show, conversely, that if $Q$ is not nice then $Q$ fails (i), (ii) or (iii).
We write
$$Q(x,y,z)=ax^2+by^2+cz^2+dyz+exz+fxy$$
and argue according to how many of $d,e,f$ are non-zero. If $d=e=f=0$, then $Q$ is diagonal and violates (i).

If exactly one of $d,e,f$ is non-zero, assume without loss of generality that $f \ne 0$. Then the polynomials $L(y,z)=fy$ and $F(y,z)=by^2+cz^2$ are algebraically dependent. It follows that $y$ divides $by^2+cz^2$, so $c=0$. We obtain $$Q(x,y,z)=ax^2+by^2+fxy\text{,}$$ and hence $Q$ fails Condition (ii).

If exactly two of $d,e,f$ are non-zero, assume without loss of generality that $e=0$. Then the polynomials $fy$ and $by^2+cz^2+dyz$ are linearly dependent, and we conclude that $c=0$ as before. Hence 
\[
y, \qquad by^2+dyz=y(by+dz)
\]
are algebraically dependent. We can argue from the definition of algebraic dependence that this is impossible, so this case cannot occur.

Finally, suppose $def \ne 0$. The polynomials $ez+fy$ and $by^2+cz^2+dyz$ are algebraically dependent, which implies that the former divides the latter. This in turn implies that the latter vanishes at $(y,z)=(-e,f)$, i.e.
$$be^2+cf^2=def\text{.}$$
Similarly, we also obtain
$$ad^2+be^2=def, \qquad
ad^2+cf^2=def\text{.}$$
The three equations together imply that $ad^2=be^2=cf^2=def/2$, which yields
$$a=\frac{ef}{2d}\text{,}\quad b=\frac{df}{2e}\text{,}\quad c=\frac{de}{2f}\text{.}$$
Now
$$Q(x,y,z)=\frac{(efx+dfy+dez)^2}{2def}\text{,}$$
so $Q$ fails Hypothesis (iii).
\end{proof}

\begin{lemma}
\label{algdepcri}
Let $L\in\bbF_q[x,y]$ be a non-zero linear form, and let $Q\in\bbF_q[x,y]$ be a quadratic polynomial such that $Q(0,0)=0$. If $L$ and $Q$ are algebraically dependent, then there exist $a,b\in\bbF_q$ such that 
\[
Q=aL^2+bL.
\]
\end{lemma}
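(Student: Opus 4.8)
The plan is to reduce to the case $L = x$ by an invertible linear change of variables, and then to apply a standard fact from the theory of function fields.

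First I would choose an invertible linear change of coordinates $(x,y)\mapsto(u,v)$ on $\bbF_q^2$ for which the new coordinate $u$ equals $L$; this is possible precisely because $L$ is a nonzero linear form. Such a change of variables is an $\bbF_q$-algebra automorphism of $\bbF_q[x,y]$ fixing the origin, so it preserves the hypotheses (it sends $Q$ to another quadratic $\widetilde Q$ with $\widetilde Q(0,0)=0$, and it preserves algebraic dependence), and it transforms the desired conclusion $Q=aL^2+bL$ into $\widetilde Q = au^2 + bu$. Hence I may assume from the outset that $L = x$, and write $Q = Ax^2 + By^2 + Cxy + Dx + Ey$, with no constant term since $Q(0,0)=0$. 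The goal becomes showing $B=C=E=0$.

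Next I would translate algebraic dependence into a statement about the function field $\bbF_q(x,y)$. Since $x$ is transcendental over $\bbF_q$, the assumption that $x$ and $Q$ are algebraically dependent over $\bbF_q$ forces $Q$ to be algebraic over the subfield $\bbF_q(x)$. The key step is then the standard fact that $\bbF_q(x)$ is algebraically closed in the purely transcendental extension $\bbF_q(x,y) = \bbF_q(x)(y)$; consequently $Q \in \bbF_q(x)$. Being simultaneously a rational function of $x$ alone and a polynomial in $x,y$, the element $Q$ must lie in $\bbF_q[x]$, which reads $B = C = E = 0$, so that $Q = Ax^2 + Dx = AL^2 + DL$, as required.

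I expect the only real subtlety to be the passage from algebraic dependence to the conclusion $Q \in \bbF_q(x)$, i.e.\ the relative algebraic closedness of $\bbF_q(x)$ in $\bbF_q(x)(y)$: any element of $\bbF_q(x)(y)$ algebraic over $\bbF_q(x)$ must be free of $y$ because $y$ is transcendental over $\bbF_q(x)$. This is the one place where the argument uses the structure of the function field rather than bare manipulation, but it is routine. If one preferred a fully elementary route, one could instead take a nonzero relation $R(x,Q)=0$, write $R(s,t)=\sum_i r_i(s)t^i$ with top coefficient $r_m\neq 0$, and expand $\sum_i r_i(x)Q^i = 0$ as a polynomial identity in $x,y$; the leading coefficient of $Q$ in $y$ is either the nonzero constant $B$ or the nonzero polynomial $Cx+E$, so its highest $y$-power cannot cancel, forcing $\deg_y Q = 0$ and hence $B=C=E=0$ directly. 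Either way the remaining computation is short.
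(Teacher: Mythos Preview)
Your proof is correct. After the linear change of variables reducing to $L=x$, the key observation that $\bbF_q(x)$ is algebraically closed in $\bbF_q(x)(y)$ immediately forces $Q\in\bbF_q[x]$, and the conclusion follows. Your elementary alternative via $y$-degrees is also sound: since $r_m(x)$ is a nonzero polynomial in $x$ alone, the top $y$-term of $r_m(x)Q^m$ cannot be cancelled by the lower powers $r_i(x)Q^i$, so $\deg_y Q=0$.

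The paper argues differently and does not change variables. It first observes that any dependence relation $P(L,Q)=0$ must have $P(0,0)=0$ (evaluate at the origin), and from this deduces that the irreducible form $L$ divides $Q$; writing $Q=LL'$, it then shows that $L$ and the linear polynomial $L'$ are algebraically dependent, which forces the linear part of $L'$ to be a scalar multiple of $L$, i.e.\ $L'=aL+b$. Your approach is more conceptual and arguably cleaner, invoking a single structural fact about purely transcendental extensions; the paper's approach is more hands-on, staying entirely within polynomial manipulations and using the specific low degrees of $L$ and $Q$. Both reach the same endpoint with comparable effort, and your function-field argument would generalise more readily (it shows at once that any polynomial algebraically dependent with $L$ lies in $\bbF_q[L]$, regardless of its degree).
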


\begin{proof}
If $P$ is a polynomial such that $P(L,Q)=0$, then by evaluating at $(0,0)$ it follows that $P$ has constant coefficient $0$, and hence it is clear that $L$, being irreducible, divides $Q$. Writing $Q=LL'$, the linear polynomials $L$ and $L'$ are algebraically dependent. If $L$ and the homogeneous linear part of $L'$ are linearly independent, then $L$ and $L'$ generate $\bbF_q[x,y]$, contradicting algebraic dependence. Therefore, since $L\neq0$, the homogeneous linear part of $L'$ is a multiple of $L$, implying that $L'=aL+b$ for some $a,b\in\bbF_q$, and the result follows.
\end{proof}

\begin{proof}[Proof of Lemma \ref{nicequad}]
Again, we will focus on proving that, if $Q$ is not nice, then $Q$ is of the form given in Lemma \ref{nicequad}. Suppose then that $Q$ is not nice. Note that neither the assumption nor the conclusion are affected by changing the constant term, so we may assume that $Q(0,0,0)=0$. Write 
\begin{align*}
&Q(x,y,z)\\&=a_1x^2+a_2y^2+a_3z^2+b_1yz+b_2xz+b_3xy+c_1x+c_2y+c_3z\text{.}
\end{align*}
If all of $b_1,b_2,b_3$ equal $0$ we are done, so we exclude that case henceforth. Assume first that only one of $b_1,b_2,b_3$ is non-zero; without loss of generality assume that it is $b_3$. The assumption that $Q$ is not nice implies that $b_3y+c_1$ and $a_2y^2+a_3z^2+c_2y+c_3z$ are algebraically dependent, which implies that $a_3=c_3=0$. But then $Q$ is not genuinely ternary.

Assume now that at most one of $b_1,b_2,b_3$ equals $0$. The assumption that $Q$ is not nice implies that
$$b_2z+b_3y+c_1,\qquad a_2y^2+a_3z^2+b_1yz+c_2y+c_3z$$
are algebraically dependent. It now follows from Lemma \ref{algdepcri} that 
\[
b_2c_2=b_3c_3\text{,}
\]
and moreover $a_2y^2+a_3z^2+b_1yz$ is a constant multiple of $(b_2z+b_3y)^2$.

If $b_3=0$, say, then the above implies that $a_2=b_1=0$, which contradicts the assumption that $b_1$ and $b_2$ are non-zero. So $b_1,b_2,b_3$ are all non-zero, and the observation in the previous paragraph, together with its two natural analogues obtained by permuting the indices, implies that $b_1c_1=b_2c_2=b_3c_3=s$ for some $s\in\bbF_q$, and that the homogeneous quadratic part of $Q$ is itself not nice. Therefore, by Lemma \ref{homcase}, there exist $d_1,d_2,d_3,e\in\bbF_q \setminus \{0\}$ such that
\begin{align*}
&e(d_1x+d_2y+d_3z)^2\\
&=a_1x^2+a_2y^2+a_3z^2+b_1yz+b_2xz+b_3xy\text{.}
\end{align*}
This implies that $a_i=ed_i^2$ and $b_i=2d_{i+1}d_{i+2}$ for $i\in\{1,2,3\}$, where $d_4 = d_1$ and $d_5 = d_2$. This, in turn, implies that $c_i=s/(2d_{i+1}d_{i+2})$ for $i\in\{1,2,3\}$, whence
\begin{align*}Q(x,y,z)&=e(d_1x+d_2y+d_3z)^2+\frac{s}{2d_2d_3}x+\frac{s}{2d_1d_3}y+\frac{s}{2d_1d_2}z\\
&=e\left(d_1x+d_2y+d_3z+\frac{s}{4ed_1d_2d_3}\right)^2-\frac{s^2}{16ed_1^2d_2^2d_3^2}\text{,}
\end{align*}
This shows that $Q$ has the desired form.
\end{proof}

\section{Ternary polynomials of higher degree}
\label{TernaryHigher}

In this section we prove Theorem \ref{conc}, showcasing the adaptability of our methods for certain polynomials of degree $d$ in as few as three variables.

\begin{proof}[Proof of Theorem \ref{conc}]
Repeating the argument in the proof of Theorem \ref{maingen} we see that, for a suitable choice of $C$, there exists a constant $c > 0$ such that the set $\cL_{Y,Z}$ of lines in $\bbF_q^2$ of the form
$$\{(s,t)\in\bbF_q^2:t=F(y,z)s+G(y,z)\}$$
with $(y,z)\in Y\times Z$ has size at least $c|Y|\cdot|Z|$.

As in the proof of Theorem \ref{maingen}, define $W=\bbF_q\setminus P(X,Y,Z)$, and let $\psi:\bbF_q^2\to\bbF_q^2$ be given by $\psi(x,w)=(x,w-ax^d)$. The set $\cP_{X,W}=\psi(X\times W)$ has size $|X|\cdot |W|$, and
$$\{(\fp,\fl)\in\cP_{X,W}\times\cL_{Y,Z}:\fp\in\fl\}=\emptyset\text{.}$$
Then Theorem \ref{VinhThm} implies that
$$|\cP_{X,W}|\cdot|\cL_{Y,Z}|\leq q^3\text{,}$$
whence
\[
|W|\ll \frac{q^3}{|X|\cdot|Y|\cdot|Z|}.
\]
The result now follows using the definition of $W$.

\end{proof}

\providecommand{\bysame}{\leavevmode\hbox to3em{\hrulefill}\thinspace}

\end{document}